\numberwithin{equation}{section}
\newtheorem{thm}[equation]{Theorem}
\newtheorem{prop}[equation]{Proposition}
\newtheorem{cor}[equation]{Corollary}
\newtheorem{lem}[equation]{Lemma}
\theoremstyle{definition}
\newtheorem{remark}[equation]{Remark}
\DeclareMathOperator{\cha}{char}
\DeclareMathOperator{\ev}{ev}
\DeclareMathOperator{\cx}{cx}
\DeclareMathOperator{\coh}{H}
\DeclareMathOperator{\Ext}{Ext}
\DeclareMathOperator{\Hom}{Hom}
\DeclareMathOperator{\Irr}{Irr}
\DeclareMathOperator{\rank}{rank}
\newcommand{\Z}{\mathbb{Z}}
\newcommand{\C}{\mathbb{C}}
\newcommand{\ot}{\otimes}
\newcommand{\V}{\mathcal{V}}
\newcommand{\g}{\mathfrak{g}}
\newcommand{\m}{\mathfrak{m}}
\newcommand{\DOT}{\setlength{\unitlength}{1pt}\begin{picture}(2.5,2)
                  (1,1)\put(2,3.5){\circle*{2}}\end{picture}}
\begin{document}

\title[Support varieties and representation type]
{Support varieties and representation type of small quantum groups}

\author{J\"org Feldvoss}
\address{Department of Mathematics and Statistics,
University of South Alabama, Mobile, AL 36688--0002, USA}
\email{jfeldvoss@jaguar1.usouthal.edu}

\author{Sarah Witherspoon}
\address{Department of Mathematics, Texas A\&M University,
College Station, TX 77843--3368, USA}
\email{sjw@math.tamu.edu}

\date{November 9, 2009}

\maketitle

%%%%%%%%%%%%%%%%%%%%%%%%%%%%%%%%%%%%%%%%%%%%%%%%%%%%%%%%%%%%%%%%%%%%%%%%%%%%%%%%%%%%%%%%%%%%%%%%%%%

\begin{abstract}
In this paper we provide a wildness criterion for any finite dimensional
Hopf algebra with finitely generated cohomology. This generalizes a result
of Farnsteiner to not necessarily cocommutative Hopf algebras over ground
fields of arbitrary characteristic. Our proof uses the theory of support
varieties for modules, one of the crucial ingredients being a tensor product
property for some special modules. As an application we prove a conjecture
of Cibils stating that small quantum groups of rank at least two are wild.
\end{abstract}

%%%%%%%%%%%%%%%%%%%%%%%%%%%%%%%%%%%%%%%%%%%%%%%%%%%%%%%%%%%%%%%%%%%%%%%%%%%%%%%%%%%%%%%%%%%%%%%%%%%

\section{Introduction}

Rickard \cite{R} discovered an important connection between the representation
type of a finite dimensional self-injective algebra over an algebraically closed
field and the complexity of its modules: If there is a module with complexity
greater than two, then the algebra is expected to be wild. However there is a
gap in the proof. Rickard relies on \cite[Lemma 1]{R}, for which there is a
counterexample that was found recently, namely the truncated polynomial algebra
$k[x,y]/(x^2,y^2)$ (see \cite{F}). Farnsteiner \cite{F} later recovered Rickard's
result for finite group schemes, using the theory of support varieties. Bergh
and Solberg \cite{BS} adapted Farnsteiner's approach to prove Rickard's result 
for self-injective algebras under some finiteness conditions on Ext-algebras of
modules. 

In this paper, we show that Farnsteiner's geometric methods apply more directly
to any finite dimensional Hopf algebra whose cohomology is finitely generated as
an algebra over an algebraically closed field of arbitrary characteristic. (More
precisely, see assumption ({\bf fg}) below.) 
Under this assumption, one may define the
cohomological support variety of any finite dimensional module. Many of the
expected properties hold just as for finite groups and, more generally, finite
group schemes (i.e., finite dimensional cocommutative Hopf algebras). Some
important classes of non-cocommutative finite dimensional Hopf algebras are known
to satisfy our finite generation assumption ({\bf fg}) under some restrictions
on the parameters: Lusztig's small quantum groups
\cite{BNPP,GK} and, more generally, finite dimensional pointed Hopf algebras
having abelian groups of group-like elements \cite{MPSW}, or certain truncated
quantized function algebras of simply-connected complex semisimple 
algebraic groups \cite{G1}. 
Moreover, Etingof and
Ostrik \cite{EO} conjectured that the cohomology of any finite tensor category
is finitely generated; in case the tensor category is the representation category
of a finite dimensional Hopf algebra, this cohomology is the same as that of the
Hopf algebra.

One useful property of cohomological support varieties for finite group schemes
is the tensor product property: Friedlander and Pevtsova \cite{FP} used rank
varieties to prove that
the variety of the tensor product of two modules for a finite group scheme is
the intersection of their varieties. 
Rank varieties have not been defined in our general setting, however
we give in Theorem \ref{tp} a weaker tensor
product property in the general setting that suffices for our purposes. That is,
the property holds for any tensor product of an arbitrary module with a special
type of module analogous to those originally constructed by Carlson \cite{Ca} in
the finite group setting, Carlson's modules $L_{\zeta}$. 
This was proved for quantum elementary abelian groups by Pevtsova and the
second author \cite{PW}, and we show here that the proof is valid more generally.
This result is precisely
what is needed to generalize Farnsteiner's results on the representation type of
finite group schemes to finite dimensional Hopf algebras satisfying ({\bf fg}).
We prove that if there exists a module having complexity at least 3 (or equivalently
the support variety of the module has dimension at least 3), then the Hopf algebra
is wild. In fact, we prove a stronger version for blocks in Theorem \ref{mainthm}.
%Thus we recover the result of Rickard \cite{R} in the special case of a finite
%dimensional Hopf algebra satisfying ({\bf fg}).

We apply our main theorem in particular to small quantum groups, whose cohomology
was computed originally by Ginzburg and Kumar \cite{GK} and later by Bendel, Nakano,
Parshall, and Pillen \cite{BNPP} under weaker conditions on the parameter. Exploiting
\cite{MPSW} to weaken these conditions even further, we show in Theorems \ref{sqb}
and \ref{sqg} that if the rank of the simple Lie algebra $\g$ is at least 2, then
both the small quantum group $u_q({\g})$ and its Borel-type subalgebra $u_q^+({\g})$
are wild. This proves a conjecture of Cibils for $u_q({\g})$ \cite{C}, who established
the analogous result for $u_q^+(\g)$, in the simply laced case, using completely
different methods.

Throughout the paper, we let $k$ be an algebraically closed field of arbitrary
characteristic. For the applications in Section \ref{QG}, we will take $k=\C$.
We will assume that all modules over Hopf algebras or their blocks are unital
left modules and that all tensor products are over $k$ unless indicated otherwise.

%%%%%%%%%%%%%%%%%%%%%%%%%%%%%%%%%%%%%%%%%%%%%%%%%%%%%%%%%%%%%%%%%%%%%%%%%%%%%%%%%%%%%%%%%%%%%%%%%%%

\section{Complexity and varieties}\label{background}

In this section, we first give some basic definitions and results on complexity
and varieties for modules of a finite dimensional Hopf algebra, and then prove
the tensor product property for Carlson's modules $L_{\zeta}$.

Let $V_{\DOT}$ be a graded vector space over $k$ with finite dimensional homogeneous
components. Define the {\em rate of growth} $\gamma(V_{\DOT})$ to be the smallest
non-negative integer $c$ such that there is a number $b$ for which $\dim_k V_n\leq b
n ^{c-1}$ for all positive integers $n$. If no such $c$ exists, we define
$\gamma(V_{\DOT})$ to be $\infty$.

Let $A$ be a finite dimensional Hopf algebra over $k$. We will also denote by $k$
the ground field as an $A$-module via the counit (or augmentation) of $A$.
%The tensor product of two $A$-modules becomes an $A$-module via the coproduct.
The {\em complexity} $\cx_A(M)$ of an $A$-module $M$ may be defined in the standard
way (cf.\ \cite[Definition 5.3.4]{Be2}):

Let
$
P_{\DOT}: \ \cdots\rightarrow P_1\rightarrow P_0\rightarrow M\rightarrow 0
$
be a minimal projective resolution of $M$. Then $\cx_A(M):=\gamma(P_{\DOT})$.

We will need the following properties of complexity.
%Denote by $M_{\vert H}$ the restriction of an $A$-module $M$ to a subalgebra $H$.

\begin{prop}\label{cx}
Let $A$ be a finite dimensional Hopf algebra over $k$, let $H$ be a Hopf subalgebra
of $A$, and let $M$ be any finite dimensional $A$-module. Then
\begin{itemize}
\item[(1)] $\ \cx_A(M)\le\cx_A(k)$.
\item[(2)] $\ \cx_H(M)\le\cx_A(M)$.
\end{itemize}
\end{prop}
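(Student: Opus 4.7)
The plan is to prove both parts by constructing explicit, not necessarily minimal, projective resolutions of the modules in question and invoking the standard fact that in each degree the minimal projective resolution has the smallest $k$-dimension among all projective resolutions; consequently $\cx_A(M)\le\gamma(Q_{\DOT})$ for every projective resolution $Q_{\DOT}\to M$.

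For part (1), the key Hopf-algebraic input I would use is that for any finite dimensional $A$-module $N$, the tensor product $A\ot N$ with diagonal action is isomorphic as a left $A$-module to the free module $A^{\dim_k N}$; the explicit isomorphism $a\ot n\mapsto\sum a_{(1)}\ot a_{(2)}n$ has an inverse built from the antipode, so this really uses the Hopf (and not merely bialgebra) structure. As a consequence, whenever $P$ is a projective $A$-module and $M$ any finite dimensional $A$-module, $P\ot M$ is projective, being a summand of $A^{r}\ot M\cong(A\ot M)^{r}$ for some $r$. Taking a minimal projective resolution $P_{\DOT}\to k\to 0$ of the trivial module and tensoring over $k$ with $M$ (which is exact, since $k$ is a field) then yields a projective resolution $P_{\DOT}\ot M\to M\to 0$. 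Because $\dim_k(P_n\ot M)=\dim_k P_n\cdot\dim_k M$ with $\dim_k M$ a fixed constant, the rate of growth of this resolution equals $\gamma(P_{\DOT})=\cx_A(k)$, whence $\cx_A(M)\le\cx_A(k)$.

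For part (2), the central ingredient is the Nichols--Zoeller theorem, which asserts that a finite dimensional Hopf algebra $A$ is free as a left module over any Hopf subalgebra $H$. Hence the restriction to $H$ of a free $A$-module is free, and more generally the restriction of a projective $A$-module is projective. Restricting a minimal projective resolution $P_{\DOT}\to M\to 0$ over $A$ therefore produces a projective resolution of $M$ over $H$ whose terms have exactly the same $k$-dimensions, and hence the same rate of growth $\cx_A(M)$. Comparing with a minimal projective resolution of $M$ over $H$ then gives $\cx_H(M)\le\cx_A(M)$.

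Neither part presents a serious obstacle once the two underlying Hopf-algebraic facts are in hand; the argument is essentially bookkeeping with resolutions. The only place where one must be careful is in invoking the antipode for the freeness of $A\ot N$ with diagonal action, where one has to keep track of which action is being used on each factor at each step.
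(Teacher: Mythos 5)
Your proof is correct and follows essentially the same route as the paper: tensoring a minimal projective resolution of $k$ with $M$ for part (1), and restricting a minimal projective $A$-resolution of $M$ along the Nichols--Zoeller freeness for part (2). You supply more detail on why $P\ot M$ is projective (the explicit $a\ot n\mapsto\sum a_{(1)}\ot a_{(2)}n$ isomorphism and its antipode-built inverse), which the paper takes for granted, but the argument is the same.
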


\begin{proof}
(1): Consider a minimal projective resolution $\cdots\rightarrow P_1\rightarrow
P_0\rightarrow k\rightarrow 0$ of the trivial $A$-module $k$. Tensoring this
resolution with $M$ yields a projective resolution $\cdots\rightarrow M\ot P_1
\rightarrow M\ot P_0\rightarrow M\rightarrow 0$ of $M$, since the tensor product
of any module with a projective module is again projective. It is clear that the
rate of growth of this resolution is also $\cx_A(k)$, and because $\cx_A(M)$ is
the rate of growth of a {\it minimal\/} projective resolution of $M$, the desired
inequality follows.

(2): Note that $A$ is a free $H$-module by the Nichols-Zoeller Theorem (see
\cite[Theorem 7]{NZ} or \cite[Theorem 3.1.5]{Mo}), and so every projective
resolution of $A$-modules restricts to a projective resolution of $H$-modules.
\end{proof}

We will use the following result, which can be established using a long exact
cohomology sequence. 

\begin{prop}\label{ses}
Let $A$ be a finite dimensional Hopf algebra over $k$, and let $0\to M_1\to M_2
\to M_3\to 0$ be a short exact sequence of finite dimensional $A$-modules. Then
$$
\cx_A(M_i)\le\max\{\cx_A(M_j),\cx_A(M_k)\}\,,
$$
whenever $\{i,j,k\}=\{1,2,3\}$.
\end{prop}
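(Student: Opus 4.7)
The plan is to reduce complexity to the rate of growth of $\Ext$ and then apply the long exact $\Ext$-sequence termwise. First, I would recall the standard reformulation: if $P_{\DOT}\to M$ is a minimal projective resolution of a finite dimensional $A$-module $M$, then
$$P_n\cong\bigoplus_{S\in\Irr A}P(S)^{\dim_k\Ext_A^n(M,S)},$$
where $P(S)$ denotes the projective cover of the simple module $S$. Since $\Irr A$ is finite and the $\dim_k P(S)$ are fixed constants, this yields
$$\cx_A(M)\;=\;\max_{S\in\Irr A}\gamma\bigl(\Ext_A^\bullet(M,S)\bigr).$$

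Next, for each fixed $S\in\Irr A$, the short exact sequence induces the long exact sequence
$$\cdots\to\Ext_A^{n-1}(M_1,S)\to\Ext_A^n(M_3,S)\to\Ext_A^n(M_2,S)\to\Ext_A^n(M_1,S)\to\Ext_A^{n+1}(M_3,S)\to\cdots,$$
and exactness at the three consecutive nodes $\Ext_A^n(M_2,S)$, $\Ext_A^n(M_1,S)$, $\Ext_A^n(M_3,S)$ gives the dimension bounds
\begin{align*}
\dim_k\Ext_A^n(M_2,S) &\le \dim_k\Ext_A^n(M_3,S)+\dim_k\Ext_A^n(M_1,S),\\
\dim_k\Ext_A^n(M_1,S) &\le \dim_k\Ext_A^n(M_2,S)+\dim_k\Ext_A^{n+1}(M_3,S),\\
\dim_k\Ext_A^n(M_3,S) &\le \dim_k\Ext_A^{n-1}(M_1,S)+\dim_k\Ext_A^n(M_2,S).
\end{align*}

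Finally, I would invoke the elementary facts that the rate of growth $\gamma$ is invariant under a single-index shift and that $\gamma(a_n+b_n)=\max\{\gamma(a_n),\gamma(b_n)\}$ for non-negative integer sequences. Applying $\gamma$ to each of the three inequalities above, and then taking the maximum over $S\in\Irr A$, gives exactly the three instances $\cx_A(M_i)\le\max\{\cx_A(M_j),\cx_A(M_k)\}$ for $i=2,1,3$, respectively. There is no real obstacle here: the only nontrivial input is the $\Ext$-reformulation of complexity displayed above, which is a standard consequence of minimality together with the finiteness of $\Irr A$; the Hopf structure of $A$ plays no further role in this proposition beyond ensuring that $A$ is a finite dimensional algebra.
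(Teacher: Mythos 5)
Your proof is correct and implements exactly the approach the paper indicates: the paper states only that the result ``can be established using a long exact cohomology sequence'' and gives no further details, and your argument is a complete and faithful realization of that, via the reformulation $\cx_A(M)=\max_{S\in\Irr A}\gamma(\Ext_A^\bullet(M,S))$ (which the paper itself uses in the proof of Proposition~\ref{complexity}) combined with the long exact $\Ext$-sequence and elementary rate-of-growth estimates. Nothing is missing, and you are right that the Hopf structure is irrelevant here beyond $A$ being a finite dimensional algebra.
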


For any $A$-module $M$, let
$
\coh^{\DOT}(A,M):=\Ext^{\DOT}_A(k,M).
$
The vector space $\coh^{\DOT}(A,k)$ is an associative graded $k$-algebra under cup
product, or equivalently under Yoneda composition (see \cite[Section 3.2]{Be1}). If 
$M$ and $N$ are any two $A$-modules, then $\coh^{\DOT}(A,k)$ acts on $\Ext^{\DOT}_A
(M,N)$ via the cup product, or equivalently as $-\ot N$ followed by Yoneda composition
(see \cite[Proposition 3.2.1]{Be1}).

We use the notational convention that
\begin{eqnarray*}
\coh^{\ev}(A,k):=
\left\{
\begin{array}{cl}
\bigoplus\limits_{n=0}^\infty\coh^n(A,k), & \mbox{if }\cha k = 2,\\ & \\
\bigoplus\limits_{n=0}^\infty\coh^{2n}(A,k), & \mbox{if }\cha k \neq 2\,.
\end{array}
\right.
\end{eqnarray*}
Since $\coh^{\DOT}(A,k)$ is graded commutative (see \cite[Section 5.6]{GK} or
\cite[Section 2.2]{SA}), in either case, $\coh^{\ev}(A,k)$ is a commutative
$k$-algebra.
%(If $\cha k\neq 2$, elements of odd degree have square 0 due to the graded
%commutativity.)
We need the following assumption for the main results in this paper.
\vspace{.2cm}

\noindent Assumption ({\bf fg}):
%\vspace{-.45cm}
\vspace{-.2cm}
\begin{equation*}
\begin{array}{l}
\mbox{{\em Assume }}\coh^{\ev}(A,k) \ \mbox{{\em is finitely generated, and that
for any two finite dimensional}}\\
A\mbox{{\em -modules }}M\mbox{ {\em and} }N, \mbox{ {\em the }} \coh^{\ev}(A,k)
\mbox{{\em -module }}\Ext^{\DOT}_A(M,N) \ \mbox{{\em is  finitely generated.}}
\end{array}
\end{equation*}

\noindent This is known to be the case, for example, if $A$ is finite dimensional
cocommutative \cite{FS}, if $A$ is a small quantum group $u_q(\g)$ under some
restrictions on $q$ \cite{BNPP,GK}, or, more generally, if $A$ is a finite
dimensional pointed Hopf algebra with an abelian group of group-like elements
under some restrictions on the parameters \cite{MPSW} 
or if $A$ is a certain truncated quantized function algebra of a simply-connected
complex semisimple algebraic group for the degree of the parameter being
not too small \cite{G1}.
We note that $\Ext^{\DOT}_A
(M,N)\cong\Ext^{\DOT}_A(k,M^*\ot N)$, an isomorphism of $\coh^{\DOT}(A,k)$-modules.
Therefore the second part of the assumption ({\bf fg}) may be replaced by the
assumption that $\coh^{\DOT}(A,M)$ is a finitely generated $\coh^{\ev}(A,k)$-module
for every finite dimensional $A$-module $M$.

Under the assumption ({\bf fg}), we may define varieties for modules in the usual
way:

Let $M$ and $N$ be $A$-modules. Let $I_A(M,N)$ be the annihilator of the action of
$\coh^{\ev}(A,k)$ on $\Ext^{\DOT}_A(M,N)$, a homogeneous ideal of $\coh^{\ev}(A,k)$,
and let $\V_A(M,N)$ denote the maximal ideal spectrum of the finitely generated
commutative $k$-algebra $\coh^{\ev}(A,k)/I_A(M,N)$. As the ideal $I_A(M,N)$ is
homogeneous, the variety $\V_A(M,N)$ is conical. If $M=N$, we write $I_A(M):=
I_A(M,M)$ and $\V_A(M):=\V_A(M,M)$. The latter is called the {\em support variety}
of $M$.

We will need the following connection between complexity and varieties.

\begin{prop}\label{complexity}
Let $A$ be a finite dimensional Hopf algebra over $k$ satisfying {\rm ({\bf fg})},
and let $M$ be a finite dimensional $A$-module. Then $$\cx_A(M)=\dim\V_A(M)\,.$$
\end{prop}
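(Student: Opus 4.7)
The plan is to connect both quantities to the rate of growth of the graded $\coh^{\ev}(A,k)$-module $\Ext^{\DOT}_A(M,S_0)$, where $S_0$ denotes the direct sum of a complete set of nonisomorphic simple $A$-modules.

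First I would observe that $\cx_A(M)=\gamma(\Ext^{\DOT}_A(M,S_0))$. In a minimal projective resolution $P_{\DOT}\to M$, the multiplicity of the projective cover $P(S)$ of a simple $S$ as a summand of $P_n$ equals $\dim_k\Ext^n_A(M,S)$, so
\[
\dim_k\Ext^n_A(M,S_0)\ \le\ \dim_k P_n\ \le\ C\cdot\dim_k\Ext^n_A(M,S_0),
\]
where $C=\max_S\dim_k P(S)$, and the two rates of growth coincide.

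Next, by assumption ({\bf fg}), $\Ext^{\DOT}_A(M,S_0)$ is finitely generated as a graded module over the finitely generated commutative graded $k$-algebra $\coh^{\ev}(A,k)$. A standard Hilbert-series argument in commutative algebra then yields
\[
\gamma(\Ext^{\DOT}_A(M,S_0))=\dim\bigl(\coh^{\ev}(A,k)/\mathrm{ann}\,\Ext^{\DOT}_A(M,S_0)\bigr)=\dim\V_A(M,S_0).
\]

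The remaining and main technical point is to identify $\V_A(M,S_0)$ with $\V_A(M)$, that is, to show that $I_A(M)$ and $\mathrm{ann}\,\Ext^{\DOT}_A(M,S_0)$ share the same radical. The easier inclusion $I_A(M)\subseteq\mathrm{ann}\,\Ext^{\DOT}_A(M,N)$ for any $A$-module $N$ follows from the standard interpretation $\zeta\cdot\alpha=\alpha\circ(\zeta\ot\mathrm{id}_M)$ of the cup-product action: if $\zeta\cdot\mathrm{id}_M=0$ in $\Ext^{\DOT}_A(M,M)$, then $\zeta\cdot\alpha=0$ for every $\alpha\in\Ext^{\DOT}_A(M,N)$. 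For the reverse containment at the radical level I would induct on the composition length $\ell(N)$ to prove: if $\zeta\in\coh^{\ev}(A,k)$ annihilates $\Ext^{\DOT}_A(M,S)$ for every simple $S$, then $\zeta^{\ell(N)}$ annihilates $\Ext^{\DOT}_A(M,N)$. The inductive step uses the long exact sequence of $\Ext^{\DOT}_A(M,-)$ attached to $0\to N'\to N\to S\to 0$ with $S$ simple: for $\alpha\in\Ext^{\DOT}_A(M,N)$, the image of $\zeta\cdot\alpha$ in $\Ext^{\DOT}_A(M,S)$ vanishes, so $\zeta\cdot\alpha$ lifts to an element of $\Ext^{\DOT}_A(M,N')$, on which $\zeta^{\ell(N)-1}$ acts as zero by the induction hypothesis. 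Applying this with $N=M$ gives $\sqrt{\mathrm{ann}\,\Ext^{\DOT}_A(M,S_0)}\subseteq\sqrt{I_A(M)}$, completing the proof.
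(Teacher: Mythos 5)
Your proof is correct, and it diverges from the paper's argument in the final and most substantive step. Both proofs begin the same way: identify $\cx_A(M)$ with $\gamma$ of Ext into simples via the minimal-resolution multiplicity count, and use the Hilbert--Serre fact that $\gamma$ of a finitely generated graded module over $\coh^{\ev}(A,k)$ equals the Krull dimension of the quotient by its annihilator (assumption ({\bf fg}) is needed here in both). Where they part ways: the paper pivots through $\Ext^{\DOT}_A(M,M)$, observing that for each simple $S$ the module $\Ext^{\DOT}_A(M,S)$ is finitely generated over $\Ext^{\DOT}_A(M,M)$ (since the $\coh^{\ev}(A,k)$-action factors through the image of the cup-product map), and closes a cycle of inequalities
$\max_S\gamma(\Ext^{\DOT}_A(M,S))\le\gamma(\Ext^{\DOT}_A(M,M))\le\gamma(P_{\DOT})\le\max_S\gamma(\Ext^{\DOT}_A(M,S))$
to conclude $\cx_A(M)=\gamma(\Ext^{\DOT}_A(M,M))=\dim\V_A(M)$. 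You instead prove the variety identity $\V_A(M)=\V_A(M,S_0)$ directly, by showing $I_A(M)$ and $\mathrm{ann}\,\Ext^{\DOT}_A(M,S_0)$ have the same radical via an induction on composition length through the long exact sequence of $\Ext^{\DOT}_A(M,-)$. Your induction is in effect a self-contained proof of Proposition~\ref{properties}(4) ($\V_A(M)=\cup_{S\in\Irr(A)}\V_A(M,S)$), which the paper lists as a standard property but does not actually invoke in its proof of Proposition~\ref{complexity}. Both arguments are sound; the paper's is marginally slicker because it avoids the composition-series induction, while yours makes the role of ({\bf fg}) and the nilpotence-modulo-radical mechanism more transparent.
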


\begin{proof}
The proof of \cite[Proposition 5.7.2]{Be2} applies in this context. We include
the details for the convenience of the reader. The cup product maps $\coh^{\ev}
(A,k)$ to $\Ext^{\DOT}_A(M,M)$, with kernel $I_A(M)$. By assumption ({\bf fg}),
$\Ext^{\DOT}_A(M,M)$ is a finitely generated module over $\coh^{\ev}(A,k)$, and
so it is a finitely generated module over the quotient $\coh^{\ev}(A,k)/I_A(M)$.
By the definition of $\V_A(M)$ and because Krull dimension and rate of growth of
a finitely generated commutative graded algebra coincide, we have
$$
\dim\V_A(M)=\dim(\coh^{\ev}(A,k)/I_A(M))=\gamma(\coh^{\ev}(A,k)/I_A(M))=\gamma
(\Ext^{\DOT}_A(M,M))\,.
$$
It remains to prove that $\cx_A(M)=\gamma(\Ext^{\DOT}_A(M,M))$.

Let $P_{\DOT}$ be a minimal resolution of $M$. The multiplicity of the projective
cover $P_A(S)$ of a simple $A$-module $S$, as a direct summand of $P_n$, is
$$
\dim_k\Hom_A(P_n,S)=\dim_k\Ext^{n}_A(M,S)\,.
$$
Let $\Irr(A)$ denote a complete set of representatives of isomorphism classes of
simple $A$-modules. It follows that
$$
\dim_k P_n=\sum_{S\in\Irr(A)}\dim_k P_A(S)\cdot\dim_k\Ext^{n}_A(M,S)\,.
$$
This shows that
$$
\gamma(P_{\DOT})\leq\max\{\gamma(\Ext^{\DOT}_A(M,S))\mid S\in\Irr(A)\}\,.
$$
On the other hand, for each simple $A$-module $S$, since $\Ext^{\DOT}(M,S)$ is a
finitely generated $\coh^{\ev}(A,k)$-module, and the action of $\coh^{\ev}(A,k)$
on $\Ext^{\DOT}_A(M,S)$ factors through $\Ext^{\DOT}_A(M,M)$, we have that
$\Ext^{\DOT}_A(M,S)$ is a finitely generated $\Ext^{\DOT}_A(M,M)$-module. This
implies $\gamma(\Ext^{\DOT}_A(M,S))\leq\gamma(\Ext^{\DOT}_A(M,M))$. It is an
immediate consequence of $\dim_k\Ext^n_A(M,M)\leq\dim_k\Hom_k(P_n,M)=(\dim_k M)
(\dim_k P_n)$ for every non-negative integer $n$ that $\gamma(\Ext^{\DOT}_A(M,M))
\leq\gamma(P_{\DOT})$. In conclusion, we obtain
\begin{eqnarray*}
\max\{\gamma(\Ext^{\DOT}_A(M,S))\mid S\in\Irr(A)\}
& \leq & \gamma(\Ext^{\DOT}_A(M,M))\\
& \leq & \gamma(P_{\DOT})\\
& \leq & \max\{\gamma(\Ext^{\DOT}_A(M,S))\mid S\in\Irr(A)\}.
\end{eqnarray*}
Thus all the inequalities above are equalities, and since $\cx_A(M)=\gamma(P_{\DOT})$,
we have shown that $\cx_A(M)=\gamma(\Ext^{\DOT}_A(M,M))$, as desired.
\end{proof}

We will also need the following properties of support varieties for modules. Again
let $\Irr(A)$ denote a complete set of representatives of isomorphism classes of 
simple $A$-modules.

\begin{prop}\label{properties}
Let $A$ be a finite dimensional Hopf algebra over $k$ satisfying {\rm ({\bf fg})},
and let $M$ and $N$ be finite dimensional $A$-modules. Then
\begin{itemize}
\item[(1)] $\V_A(M)=\{0\}$ if and only if $M$ is projective.
\item[(2)] $\V_A(M\oplus N)=\V_A(M)\cup\V_A(N)$.
\item[(3)] $\V_A(M,N)\subseteq\V_A(M)\cap\V_A(N)$.
\item[(4)] $\V_A(M)=\cup_{S\in\Irr(A)}\V_A(M,S)=\cup_{S\in\Irr(A)}\V_A(S,M)$.
\item[(5)] $\V_A(M\ot N)\subseteq\V_A(M)\cap\V_A(N)$.
\end{itemize}
\end{prop}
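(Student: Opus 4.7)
The plan is to dispatch the five parts in the order $(1), (3), (2), (4), (5)$, since $(2)$ is a formal consequence of $(3)$ together with the direct-sum decomposition of $\Ext$, and $(5)$ uses the same cup-product bookkeeping as $(3)$. The only tools beyond this bookkeeping are Proposition \ref{complexity}, invoked for $(1)$, and a long exact sequence plus composition series argument for $(4)$.

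For $(1)$, Proposition \ref{complexity} identifies $\V_A(M) = \{0\}$ with $\cx_A(M) = 0$, i.e.\ the minimal projective resolution of $M$ eventually vanishes, so $M$ has finite projective dimension; since every finite dimensional Hopf algebra is Frobenius and hence self-injective, this forces $M$ to be projective (the converse is immediate). For $(3)$, the action of $\zeta \in \coh^{\ev}(A,k)$ on $\xi \in \Ext^{\DOT}_A(M,N)$ is Yoneda composition $\xi \circ (\zeta \otimes 1_M)$, so $\zeta \in I_A(M)$ (which says $\zeta \otimes 1_M = 0$ in $\Ext^{\DOT}_A(M,M)$) makes $\zeta$ annihilate $\Ext^{\DOT}_A(M,N)$; the symmetric argument using $(\zeta \otimes 1_N) \circ \xi$ covers $\zeta \in I_A(N)$. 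Hence $I_A(M) + I_A(N) \subseteq I_A(M,N)$, giving $(3)$. Part $(2)$ then follows from the direct-sum decomposition
\[
\Ext^{\DOT}_A(M \oplus N, M \oplus N) \cong \Ext^{\DOT}_A(M,M) \oplus \Ext^{\DOT}_A(M,N) \oplus \Ext^{\DOT}_A(N,M) \oplus \Ext^{\DOT}_A(N,N),
\]
whose annihilator is $I_A(M) \cap I_A(M,N) \cap I_A(N,M) \cap I_A(N) = I_A(M) \cap I_A(N)$, the last equality by $(3)$; taking maximal ideal spectrum converts intersection of ideals into union of subvarieties.

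For $(4)$, the inclusion $\bigcup_{S \in \Irr(A)} \V_A(M,S) \subseteq \V_A(M)$ is immediate from $(3)$. For the reverse, choose a composition series $0 = N_0 \subset N_1 \subset \cdots \subset N_r = M$ with simple sections $S_i := N_i/N_{i-1}$. The long exact sequence in $\Ext^{\DOT}_A(M, -)$ arising from $0 \to N_{i-1} \to N_i \to S_i \to 0$, combined with $\coh^{\ev}(A,k)$-equivariance of the connecting homomorphism, yields $I_A(M, N_{i-1}) \cdot I_A(M, S_i) \subseteq I_A(M, N_i)$; iterating gives $\prod_{i=1}^r I_A(M, S_i) \subseteq I_A(M)$ and hence $\V_A(M) \subseteq \bigcup_i \V_A(M, S_i) \subseteq \bigcup_{S \in \Irr(A)} \V_A(M,S)$. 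The identity $\V_A(M) = \bigcup_S \V_A(S, M)$ is obtained symmetrically, via a composition series of $M$ placed in the first slot. For $(5)$, the exact functor $-\otimes N$ induces a $k$-linear map $\Ext^{\DOT}_A(M,M) \to \Ext^{\DOT}_A(M \otimes N, M \otimes N)$, $\alpha \mapsto \alpha \otimes 1_N$, sending $\zeta \otimes 1_M$ to $\zeta \otimes 1_{M \otimes N}$; thus $\zeta \in I_A(M)$ forces $\zeta \in I_A(M \otimes N)$, and interchanging $M$ and $N$ gives $I_A(N) \subseteq I_A(M \otimes N)$, proving $(5)$.

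The main obstacle I anticipate is the cup-product identification underlying $(3)$ and $(5)$: one must verify that the $\coh^{\ev}(A,k)$-action on $\Ext^{\DOT}_A(M,N)$ really is Yoneda composition with $\zeta \otimes 1_M$ (or $\zeta \otimes 1_N$), and that tensoring extensions by $1_N$ is $\coh^{\ev}(A,k)$-equivariant. These facts are standard (see \cite[Section 3.2]{Be1}), but they are the one place where genuine homological structure, rather than formal manipulation of ideals and varieties, enters the argument.
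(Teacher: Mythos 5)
Your treatment of parts (1) through (4) is sound and tracks the standard arguments that the paper outsources to \cite[Section 5.7]{Be2} (for (1) you route through Proposition \ref{complexity} rather than an ideal-theoretic argument, and you should note in passing that $\V_A(M)$ is conical so ``dimension $0$'' is the same as ``$=\{0\}$,'' but this is fine). The issue is in (5).

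Your first inclusion for (5) is correct and matches the paper: the functor $-\ot N$ sends $\zeta\ot 1_M$ to $(\zeta\ot 1_M)\ot 1_N=\zeta\ot 1_{M\ot N}$, so $I_A(M)\subseteq I_A(M\ot N)$. But then you write ``interchanging $M$ and $N$ gives $I_A(N)\subseteq I_A(M\ot N)$.'' This is a genuine gap. Literally interchanging $M$ and $N$ in your argument uses $-\ot M$ applied to $\Ext^{\DOT}_A(N,N)$ and yields $I_A(N)\subseteq I_A(N\ot M)$, not $I_A(M\ot N)$. The Hopf algebra $A$ is emphatically \emph{not} assumed cocommutative here (that is the whole point of the paper, whose application is to $u_q(\g)$), so $M\ot N$ and $N\ot M$ need not be isomorphic, and one cannot simply swap the factors. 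What you would actually need is that the $\coh^{\ev}(A,k)$-action on $\Ext^{\DOT}_A(M\ot N,M\ot N)$ also factors through $\Ext^{\DOT}_A(N,N)$ via $M\ot-$, i.e.\ that $1_M\ot\zeta\ot 1_N$ and $\zeta\ot 1_M\ot 1_N$ give the same element of $\Ext^n_A(M\ot N,M\ot N)$; this is a left-versus-right unit compatibility in the (derived) monoidal category that is not supplied by the functoriality of $-\ot N$ alone.

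The paper sidesteps this entirely: to get the second inclusion it uses the adjunction $\Ext^{\DOT}_A(M\ot N,M\ot N)\cong\Ext^{\DOT}_A(N,M^*\ot M\ot N)$ as $\coh^{\ev}(A,k)$-modules, so that $\V_A(M\ot N)=\V_A(N,M^*\ot M\ot N)\subseteq\V_A(N)$ by part (3). You should replace your ``interchange'' step with this duality argument (or else prove the unit-compatibility statement explicitly). As written, (5) is not established.
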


\begin{proof}
The proofs of (1)--(4) are exactly analogous to the proofs in \cite[Section 5.7]{Be2}.

The proof of (5) is standard, however we provide the details for the convenience
of the reader: The action of $\coh^{\ev}(A,k)$ on $\Ext^{\DOT}_A(M\ot N, M\ot N)$
factors through its action on $\Ext^{\DOT}_A(M,M)$, as we may first apply $-\ot M$,
then $-\ot N$, and finally Yoneda composition. Thus $I_A(M,M)\subseteq I_A(M\ot N,M
\ot N)$, which implies that $\V_A(M\ot N)\subseteq\V_A(M)$. To show that $\V_A(M\ot
N)\subseteq\V_A(N)$, note that $\Ext^{\DOT}_A(M\ot N,M\ot N)\cong\Ext^{\DOT}_A(N,
M^*\ot M\ot N)$, so $\V_A(M\ot N)=\V_A(N,M^*\ot M\ot N)$, which is contained in
$\V_A(N)$ by (3).
%(The proof is not symmetric in $M$ and $N$, since if $A$ is not cocommutative, it
%is not clear how to use either argument for the other inclusion.)
\end{proof}

Ostrik conjectured in \cite[Remark 3.7(ii)]{O} that the containment in (5) is an
equality. In Theorem \ref{tp} below, we will prove equality in a special case.

Let $\Omega_A(M)$ denote the kernel of an epimorphism from 
the projective cover of $M$ to $M$, let $n>0$, and
let $0\neq\zeta\in\coh^{n}(A,k)\cong\Hom_A(\Omega_A^{n}(k),k)$. We will define a
corresponding $A$-module $L_{\zeta}$, following the original construction by Carlson
\cite{Ca} for finite groups: Let $L_\zeta$ be the kernel of a representative map
$
\widehat\zeta:\Omega_A^n(k)\rightarrow k\,.
$
So $L_{\zeta}$ is defined by the short exact sequence
\begin{equation*}
0\longrightarrow L_\zeta\longrightarrow\Omega_A^n(k)\stackrel{\hat{\zeta}}
\longrightarrow k\longrightarrow 0\,.
\end{equation*}

For each $\zeta\in\coh^{\ev}(A,k)$, denote by $\langle\zeta\rangle$ the ideal of
$\coh^{\ev}(A,k)$ generated by $\zeta$. For each ideal $I$ of $\coh^{\ev}(A,k)$,
denote by $Z(I)$ its zero set, that is the set of all maximal ideals of $\coh^{\ev}
(A,k)$ containing $I$.

The following theorem is a special case of the conjectured equality $\V_A(M\ot N)
=\V_A(M)\cap\V_A(N)$.

\begin{thm}\label{tp}
Let $A$ be a finite dimensional Hopf algebra over $k$ satisfying {\rm ({\bf fg})},
let $M$ be a finite dimensional $A$-module, and let $\zeta$ be a 
non-zero homogeneous element
of positive degree in $\coh^{\ev}(A,k)$. Then
$$
\V_A(M\ot L_{\zeta})=\V_A(M)\cap Z(\langle\zeta\rangle)\,.
$$
In particular, $\V_A(L_{\zeta})=Z(\langle\zeta\rangle)$.
\end{thm}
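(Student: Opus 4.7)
My plan is to establish the two inclusions of the claimed equality separately; the special case $\V_A(L_\zeta) = Z(\langle\zeta\rangle)$ then emerges by specializing the first inclusion to $M = k$ together with the second.

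First I will prove the inclusion $\V_A(M \ot L_\zeta) \subseteq \V_A(M) \cap Z(\langle\zeta\rangle)$. Proposition \ref{properties}(5) already gives $\V_A(M \ot L_\zeta) \subseteq \V_A(M) \cap \V_A(L_\zeta)$, so it suffices to show $\V_A(L_\zeta) \subseteq Z(\langle\zeta\rangle)$. For this I will show that $\zeta^2$ annihilates $\Ext^{\DOT}_A(L_\zeta, L_\zeta) \cong \coh^{\DOT}(A, L_\zeta^* \ot L_\zeta)$. Tensoring the defining sequence $0 \to L_\zeta \to \Omega^n k \to k \to 0$ with $L_\zeta^*$ and applying $\coh^{\DOT}(A, -)$ produces a long exact sequence. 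Because $A$ is self-injective (Hopf algebras are Frobenius), $L_\zeta^* \ot \Omega^n k$ is stably isomorphic to $\Omega^n L_\zeta^*$, so $\coh^i(A, L_\zeta^* \ot \Omega^n k) \cong \coh^{i-n}(A, L_\zeta^*)$ in sufficiently large degree, and under this identification the map induced by $\mathrm{id}_{L_\zeta^*} \ot \hat{\zeta}$ becomes cup product with $\zeta$. Thus in each large enough degree, $\coh^i(A, L_\zeta^* \ot L_\zeta)$ fits into an extension whose outer terms are the $\zeta$-cokernel and the $\zeta$-kernel of $\coh^{\DOT}(A, L_\zeta^*)$. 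Both extreme terms are killed by $\zeta$, so the middle is killed by $\zeta^2$. Hence $\zeta^2 \in I_A(L_\zeta)$, and $\V_A(L_\zeta) \subseteq Z(\langle\zeta^2\rangle) = Z(\langle\zeta\rangle)$.

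For the reverse inclusion $\V_A(M) \cap Z(\langle\zeta\rangle) \subseteq \V_A(M \ot L_\zeta)$, Proposition \ref{properties}(4) reduces the task to showing $\V_A(M, S) \cap Z(\langle\zeta\rangle) \subseteq \V_A(M \ot L_\zeta, S)$ for each $S \in \Irr(A)$. I will tensor the defining sequence with $M$ and apply $\Ext^{\DOT}_A(-, S)$. Using the stable identification $\Ext^i_A(M \ot \Omega^n k, S) \cong \Ext^{i+n}_A(M, S)$ and that the connecting map becomes cup product with $\zeta$, the resulting long exact sequence reads, in sufficiently high degree,
\begin{equation*}
\cdots \longrightarrow \Ext^i_A(M, S) \stackrel{\zeta \cup}{\longrightarrow} \Ext^{i+n}_A(M, S) \longrightarrow \Ext^i_A(M \ot L_\zeta, S) \longrightarrow \Ext^{i+1}_A(M, S) \longrightarrow \cdots.
\end{equation*}
Writing $E = \Ext^{\DOT}_A(M, S)$, this exhibits (a shift of) $E/\zeta E$ as a subquotient of $\Ext^{\DOT}_A(M \ot L_\zeta, S)$, so the support of $E/\zeta E$ is contained in $\V_A(M \ot L_\zeta, S)$. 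Since $E$ is finitely generated over $\coh^{\ev}(A, k)$ by ({\bf fg}), Nakayama's lemma applied at each maximal ideal $\mathfrak{m} \in \V_A(M, S)$ containing $\zeta$ shows that $E_{\mathfrak{m}}/\zeta E_{\mathfrak{m}} \neq 0$, so the support of $E/\zeta E$ equals $\V_A(M, S) \cap Z(\langle\zeta\rangle)$. Taking unions over $S$ and reapplying Proposition \ref{properties}(4) completes the second inclusion.

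The principal technical obstacle will be justifying the identification of the connecting homomorphism in each long exact sequence with cup product by $\zeta$. This relies on $A$ being self-injective, so that $N \ot \Omega^n k$ is stably isomorphic to $\Omega^n N$ for any finite dimensional $N$, and on the observation that the map induced by $\mathrm{id}_N \ot \hat{\zeta}$ realizes the class $\zeta \cdot \mathrm{id}_N \in \Ext^n_A(N, N)$. Low-degree discrepancies in these stable identifications will not affect the variety computation, since $\V_A(-, -)$ depends only on the annihilator of a finitely generated graded $\coh^{\ev}(A, k)$-module and thus only on its behaviour in arbitrarily high degree.
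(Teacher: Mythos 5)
Your proposal is correct and establishes both inclusions, but it takes a recognizably different route from the paper on the ``easy'' half and packages the ``hard'' half differently.

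For the inclusion $\V_A(L_\zeta)\subseteq Z(\langle\zeta\rangle)$, the paper argues by localization: if $\zeta\notin\m$ then $\zeta$ is invertible in $\coh^{\ev}(A,k)_\m$, so multiplication by $\zeta$ is an isomorphism on $\Ext^{\DOT}_A(k,S)_\m$, and exactness of localization applied to the defining triangle forces $\Ext^{\DOT}_A(L_\zeta,S)_\m=0$. You instead show $\zeta^2$ annihilates $\Ext^{\DOT}_A(L_\zeta,L_\zeta)$ in high degree, which is a classical Carlson-style argument and works fine; the extra care about low degrees that you flag is genuinely necessary in your approach but not in the paper's, since the annihilator ideal sees all degrees while the support only sees tails (low degrees contribute only the irrelevant maximal ideal, which lies in every conical subvariety). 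For the reverse inclusion $\V_A(M)\cap Z(\langle\zeta\rangle)\subseteq\V_A(M\ot L_\zeta)$, both you and the paper reduce to simple $S$, build the same long exact sequence, and invoke Nakayama; the paper phrases it as a proof by contradiction with an element-chase (picking $a\in I_A(M\ot L_\zeta,S)\setminus\m$ to show $\Ext^{\DOT}_A(M,S)_\m=\zeta\Ext^{\DOT}_A(M,S)_\m$), whereas you exhibit a shift of $E/\zeta E$ as a graded subquotient of $\Ext^{\DOT}_A(M\ot L_\zeta,S)$ and compare supports directly, which is cleaner and morally equivalent.

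Two small imprecisions worth fixing. First, the map that becomes cup product by $\zeta$ in the long exact sequence is not the connecting homomorphism $\delta$; it is the map induced by $\mathrm{id}_M\ot\hat\zeta$ (equivalently, $\Ext^i_A(M,S)\to\Ext^i_A(M\ot\Omega^n_A k,S)\cong\Ext^{i+n}_A(M,S)$), exactly as the paper states. The connecting map is $\Ext^i_A(M\ot L_\zeta,S)\to\Ext^{i+1}_A(M,S)$. Your displayed sequence is nevertheless correct. Second, the stable isomorphism $N\ot\Omega^n_A k\simeq\Omega^n_A N$ modulo projectives relies not so much on self-injectivity per se but on the Hopf-algebra fact that tensoring a projective with any module is projective, so that $N\ot P_\DOT$ is a projective (though not necessarily minimal) resolution of $N$; Krull--Schmidt then gives the comparison with the minimal one.
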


\begin{proof}
The proof is essentially that of \cite[Proposition 3]{PW} where it is stated only
in the case that $A$ is a quantum elementary abelian group. This result is in fact
valid under our more general assumptions. 

Let $N$ and $N'$ be finite dimensional $A$-modules, and let $\m$ be a maximal ideal
in $\coh^{\ev}(A,k)$.
%Note that
Since $\Ext^{\DOT}_A(N,N')$ is finitely generated over $\coh^{\ev}(A,k)$ by assumption
({\bf fg}), we have that $\m\in\V_A(N,N')$ if and only if $I_A(N,N')\subseteq\m$ if and
only if $\Ext^{\DOT}_A(N,N')_\m\neq 0$.

We will first show that $\V_A(M)\cap Z(\langle\zeta\rangle)\subseteq\V_A(M\ot L_{\zeta})$.
By applying Proposition~\ref{properties}(4) to $M$ and to $M\ot L_{\zeta}$, we have
$$
\V_A(M)=\cup_{S\in\Irr(A)}\V_A(M,S)\mbox{ and }
\V_A(M\ot L_{\zeta})=\cup_{S\in\Irr(A)}\V_A(M\ot L_{\zeta},S)\,.
$$
Thus it suffices to show that
$$
\V_A(M,S)\cap Z(\langle\zeta\rangle)\subseteq\V_A(M\ot L_{\zeta},S)
$$
for every simple $A$-module $S$. Let $\m$ be a maximal ideal in $\V_A(M,S)\cap Z(\langle
\zeta\rangle)$. Then $I_A(M,S)\subseteq\m$ and $\langle\zeta\rangle\subseteq\m$, and
therefore $\m$ contains the ideal generated by $I_A(M,S)$ and $\zeta$. We must show that
$\m\in\V_A(M\ot L_{\zeta},S)$, that is $I_A(M\ot L_{\zeta},S)\subseteq\m$.

Suppose $I_A(M\ot L_{\zeta},S)\not\subseteq\m$. Then as noted above, $\Ext^{\DOT}_A
(M\ot L_{\zeta},S)_\m=0$. Apply $M\ot-$ and $\Ext^{\DOT}_A(-,S)$ to the exact sequence
$$
0\rightarrow L_{\zeta}\rightarrow\Omega_A^n(k)\stackrel{\hat{\zeta}}{\longrightarrow}k
\rightarrow 0\,.
$$
Now $\Omega_A^n(M)$ is isomorphic to $M\ot\Omega_A^n(k)$ up to projective direct
summands, and so $\Ext^i_A(M\ot\Omega_A^n(k),S)\cong \Ext^{i+n}_A(M,S)$. Thus we
obtain a long exact sequence
$$
\cdots\rightarrow\Ext^i_A(M,S)\stackrel{\tilde{\zeta}}{\rightarrow}\Ext^{i+n}_A(M,S)
\stackrel{\eta}{\rightarrow}\Ext^i_A(M\ot L_{\zeta},S)\stackrel{\delta}{\rightarrow}
\Ext^{i+1}_A(M,S)\rightarrow\cdots
$$
where the map $\tilde{\zeta}:\Ext^i_A(M,S)\rightarrow\Ext^{i+n}_A(M,S)$ is just the
action of $\zeta\in\coh^n(A,k)$ on $\Ext_A^i(M,S)$. The maps in the sequence are
$\coh^{\ev}(A,k)$-module homomorphisms. Let $z\in\Ext^{i+n}_A(M,S)$, and let $a$ be
any homogeneous element of $I_A(M\ot L_{\zeta},S)$ that is not in $\m$. So we have
$\eta(az)=a\eta(z)=0$. Considering the above long exact sequence, since $\eta(az)=0$,
we have $az=\zeta y$ for some $y\in \Ext_A^{i+\deg(a)}(M,S)$, so that $z=\zeta a^{-1}y
\in\zeta\Ext^{\DOT}_A(M,S)_\m$. Therefore
$$
\Ext^j_A(M,S)_\m=\zeta \Ext^j_A(M,S)_\m
$$
for all $j>n$. We will show that this also holds for $j\leq n$. Assume $z\in\Ext^j_A(M,S)$
for $j\leq n$. Let $b$ be a homogeneous element of positive degree in $\coh^{\ev}(A,k)$
that is not in $\m$. Multiply $z$ by a large enough power of $b$ so that $\deg(b^m z)>n$.
Then $b^mz\in \Ext^{\DOT}_A(M,S)_\m=\zeta \Ext^{\DOT}_A(M,S)_\m$ as above. Now $b$ is
invertible in $\Ext^{\DOT}_A(M,S)_\m$, so we obtain $z\in\zeta\Ext^{\DOT}_A(M,S)_\m$.

Since $\zeta\in\m$ and $\Ext^{\DOT}_A(M,S)$ is finitely generated over $\coh^{\ev}(A,k)$
by assumption ({\bf fg}), Nakayama's Lemma applied to the local ring $\coh^{\ev}(A,k)_\m$
implies that $\Ext^{\DOT}_A(M,S)_\m=0$. This contradicts the assumption $I_A(M,S)\subseteq
\m$. Therefore $I_A(M\ot L_{\zeta},S)\subseteq\m$, and so $\V_A(M,S)\cap Z(\langle\zeta
\rangle)\subseteq \V_A(M\ot L_{\zeta},S)$.

To prove the opposite inclusion $\V_A(M\ot L_{\zeta})\subseteq\V_A(M)\cap Z(\langle\zeta
\rangle)$, by Proposition~\ref{properties}(5) it suffices to show that $\V_A(L_{\zeta})
\subseteq Z(\langle\zeta\rangle)$. Applying Proposition \ref{properties}(4) again, it is
enough to show that $\V_A(L_{\zeta},S)\subseteq Z(\langle\zeta\rangle)$ for every simple
$A$-module $S$. Thus we need to show that if $\m$ is a maximal ideal of $\coh^{\ev}(A,k)$
for which $\Ext^{\DOT}_A(L_{\zeta},S)_\m\neq 0$, then $\zeta\in\m$. Assume to the contrary
that $\zeta\not\in\m$. Then multiplication by $\zeta$ induces an isomorphism on
$\Ext^{\DOT}_A(k,S)_\m$, since $\zeta$ is invertible in $\coh^{\ev}(A,k)_\m$. As localization
is exact, the existence of the short exact sequence defining $L_{\zeta}$ implies that
$\Ext^{\DOT}_A(L_{\zeta},S)_\m$ is the kernel of the isomorphism $\zeta:\Ext^{\DOT}_A
(k,S)_\m\rightarrow\Ext^{\DOT+n}_A(k,S)_\m$ and therefore $\Ext^{\DOT}_A(L_{\zeta},S)_\m=0$.

Finally, by setting $M=k$ we obtain the second statement.
\end{proof}

As a consequence of the theorem, we may find modules for blocks having prescribed
varieties:

\begin{cor}\label{block}
Let $A$ be a finite dimensional Hopf algebra over $k$ satisfying {\rm ({\bf fg})}. Let
$B$ be a block of $A$, let $M$ be a finite dimensional $B$-module, and let $d$ be a
positive integer. Then for any non-zero $\zeta\in\coh^{2d}(A,k)$, there exists a $B$-module
$N_{\zeta}$ such that $\V_A(N_{\zeta})=Z(\langle\zeta\rangle)\cap\V_A(M)$ and $\dim_k
N_{\zeta}\leq(\dim_k M)(\dim_k\Omega^{2d}_A(k))$.
\end{cor}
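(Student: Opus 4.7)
The plan is to construct $N_\zeta$ as the $B$-block component of the tensor product $M\otimes L_\zeta$. Specifically, letting $e_B$ denote the central primitive idempotent of $A$ corresponding to the block $B$, I would set
$$N_\zeta := e_B\cdot(M\otimes L_\zeta).$$
Since $M\otimes L_\zeta$ is an $A$-module via the comultiplication, it decomposes as $M\otimes L_\zeta=\bigoplus_{C}e_C(M\otimes L_\zeta)$ over all blocks $C$ of $A$, so $N_\zeta$ is an $A$-direct summand of $M\otimes L_\zeta$ that is annihilated by every block idempotent except $e_B$, hence is a $B$-module. The dimension bound is then immediate: the defining short exact sequence $0\to L_\zeta\to\Omega_A^{2d}(k)\to k\to 0$ (with $n=2d$ since $\zeta\in\coh^{2d}(A,k)$) gives $\dim_k L_\zeta\leq\dim_k\Omega_A^{2d}(k)$, so $\dim_k N_\zeta\leq\dim_k(M\otimes L_\zeta)\leq(\dim_k M)(\dim_k\Omega_A^{2d}(k))$.

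For the variety equality, one inclusion is free: $N_\zeta$ is an $A$-direct summand of $M\otimes L_\zeta$, so by Proposition \ref{properties}(2) and Theorem \ref{tp},
$$\V_A(N_\zeta)\subseteq\V_A(M\otimes L_\zeta)=\V_A(M)\cap Z(\langle\zeta\rangle).$$
The reverse inclusion is the step requiring care, and is where the block structure must be exploited. The idea is to reduce to the simple-module description of Proposition \ref{properties}(4): since $M$ lies in block $B$, $\Ext^{\DOT}_A(M,S)=0$ whenever $S\notin\Irr(B)$, so $\V_A(M)=\bigcup_{S\in\Irr(B)}\V_A(M,S)$, and therefore
$$\V_A(M)\cap Z(\langle\zeta\rangle)=\bigcup_{S\in\Irr(B)}\V_A(M,S)\cap Z(\langle\zeta\rangle).$$
The first half of the proof of Theorem \ref{tp} already establishes, for each simple $A$-module $S$, the containment $\V_A(M,S)\cap Z(\langle\zeta\rangle)\subseteq\V_A(M\otimes L_\zeta,S)$.

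It remains to convert this back to $N_\zeta$, which is the main obstacle and is handled by a block-orthogonality argument. For $S\in\Irr(B)$ and any block $C\neq B$, one has $\Ext^{\DOT}_A(e_C(M\otimes L_\zeta),S)=0$ because source and target sit in orthogonal blocks; consequently the decomposition $M\otimes L_\zeta=\bigoplus_C e_C(M\otimes L_\zeta)$ yields
$$\Ext^{\DOT}_A(M\otimes L_\zeta,S)=\Ext^{\DOT}_A(N_\zeta,S),$$
whence $\V_A(M\otimes L_\zeta,S)=\V_A(N_\zeta,S)\subseteq\V_A(N_\zeta)$ by Proposition \ref{properties}(4). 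Combining gives $\V_A(M)\cap Z(\langle\zeta\rangle)\subseteq\V_A(N_\zeta)$, completing the proof.
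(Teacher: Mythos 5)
Your proof is correct, but it takes a genuinely different route from the paper for the key step, which is the inclusion $\V_A(M)\cap Z(\langle\zeta\rangle)\subseteq\V_A(N_\zeta)$. The paper (following Farnsteiner) argues that the complementary piece $(1-e_B)\cdot(M\ot L_\zeta)$ is projective: since $\Omega_A^{2d}(M)$ lies in block $B$ and differs from $M\ot\Omega_A^{2d}(k)$ only by a projective summand, applying $(1-e_B)$ to the defining short exact sequence of $M\ot L_\zeta$ (and using $(1-e_B)M=0$) shows $(1-e_B)(M\ot L_\zeta)$ is projective. Then Proposition \ref{properties}(1),(2) give $\V_A(M\ot L_\zeta)=\V_A(N_\zeta)\cup\{0\}=\V_A(N_\zeta)$ in one shot, and Theorem \ref{tp} finishes. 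You instead avoid the projectivity observation entirely: you invoke block orthogonality of $\Ext$ (for $S\in\Irr(B)$ and $C\neq B$, $\Ext^{\DOT}_A(e_C(M\ot L_\zeta),S)=0$) together with the intermediate containment $\V_A(M,S)\cap Z(\langle\zeta\rangle)\subseteq\V_A(M\ot L_\zeta,S)$ lifted directly from inside the proof of Theorem \ref{tp}. Both arguments are sound; the paper's version is more self-contained (it uses only the \emph{statements} of Theorem \ref{tp} and Proposition \ref{properties}) and yields the stronger fact that the off-block component of $M\ot L_\zeta$ is projective, whereas your version trades that for a more elementary-feeling $\Ext$-vanishing argument at the cost of reaching into the internals of a previous proof.
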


\begin{proof}
The proof is exactly the same as \cite[Proposition 2.2]{F}, using Theorem \ref{tp} and
Proposition \ref{properties}(1),(2) in the last part of the argument. The module $N_{\zeta}$
is defined to be $e\cdot(M\ot L_{\zeta})$, where $e$ is the primitive central idempotent
associated to $B$. The crucial observation in the proof is that $(1-e)\cdot (M\ot
L_{\zeta})$ is projective since $\Omega^{2d}_A(M)$ and $M\ot \Omega^{2d}_A(k)$ differ
by a projective module.
\end{proof}

\begin{remark}\label{any}
A further consequence of the theorem is that any conical subvariety of $\V_A(k)$ can
be realized as the support variety of some module: If $I=\langle\zeta_1,\ldots,\zeta_t
\rangle$ is any homogeneous ideal in $\coh^{\ev}(A,k)$, we may successively apply both
parts of Theorem \ref{tp} to obtain the $A$-module $M=L_{\zeta_1}\ot\cdots\ot L_{\zeta_t}$
with the property that $\V_A(M)=Z(I)$. Similarly, we may use Corollary \ref{block} to
obtain a generalization of \cite[Corollary 2.3]{F}, namely that any conical subvariety
of the variety $\V_B:=\cup_{S\in\Irr(B)}\V_A(S)$ of a block $B$ can be realized as the
support variety of some $B$-module. Avramov and Iyengar \cite[Existence Theorem 5.4]{AI}
proved a more general realizability result that does not use tensor products. 
\end{remark}

%%%%%%%%%%%%%%%%%%%%%%%%%%%%%%%%%%%%%%%%%%%%%%%%%%%%%%%%%%%%%%%%%%%%%%%%%%%%%%%%%%%%%%%%%%%%%%%%%%%

\section{Complexity and representation type}

Theorem \ref{mainthm} below generalizes \cite[Theorem 3.1]{F}. We show that Farnsteiner's
proof works in this more general context, as an application of the general theory developed
in Section~\ref{background}. For a similar result in the setting of self-injective algebras,
see \cite[Corollary 4.2]{BS}.
Here we develop and apply the theory for $\coh^{\DOT}(A,k)$ directly,
in the spirit of prior work on finite group schemes.

Recall that finite dimensional associative algebras over an algebraically closed
field can be divided into three classes (see \cite[Corollary C]{CB} or \cite[Theorem
4.4.2]{Be1}): An algebra $A$ is {\em representation-finite\/} if there are only
finitely many isomorphism classes of finite dimensional indecomposable $A$-modules.
An algebra $A$ is called {\em tame\/} if it is not representation-finite and if the
isomorphism classes of indecomposable $A$-modules in any fixed dimension are almost
all contained in a finite number of one-parameter families. An algebra $A$ is said
to be {\em wild\/} if the category of finite dimensional $A$-modules contains the
category of finite dimensional modules over the free associative algebra in two
indeterminates. (For more precise definitions we refer the reader to \cite[Definition
4.4.1]{Be1}.) Note that the classification of indecomposable objects (up to isomorphism)
of the latter category is a well-known unsolvable problem and so one is only able
to classify the finite dimensional indecomposable modules of representation-finite
or tame algebras.

\begin{thm}\label{mainthm}
Let $A$ be a finite dimensional Hopf algebra over $k$ for which assumption
{\rm ({\bf fg})} holds, and let $B$ be a block of $A$. If there is a finite
dimensional $B$-module $M$ such that $\cx_A(M)\ge 3$, then $B$ is wild.
\end{thm}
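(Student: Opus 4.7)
My plan is to adapt Farnsteiner's strategy from \cite[Theorem 3.1]{F}, relying on the Hopf-algebra machinery from Section~\ref{background} in place of its finite-group-scheme counterparts. Suppose for contradiction that $B$ is not wild; by the tame-wild dichotomy \cite[Corollary C]{CB}, $B$ is then representation-finite or tame. I will produce infinitely many pairwise non-isomorphic indecomposable $B$-modules of a common bounded dimension whose support varieties are pairwise distinct, and argue this contradicts both possibilities.

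By Proposition~\ref{complexity}, $\cx_A(M)\ge 3$ gives $\dim\V_A(M)\ge 3$. Fix a positive integer $d$ large enough that $\coh^{2d}(A,k)$ contains non-zero homogeneous elements cutting out infinitely many pairwise distinct divisors on $\V_A(M)$ --- this is possible because the projective variety $\mathrm{Proj}(\coh^{\ev}(A,k)/I_A(M))$ has dimension $\ge 2$ and the graded algebra $\coh^{\ev}(A,k)/I_A(M)$ is generated in bounded degrees. For each non-zero $\zeta\in\coh^{2d}(A,k)$, Corollary~\ref{block} produces a $B$-module $N_\zeta$ satisfying
$$
\V_A(N_\zeta)=Z(\langle\zeta\rangle)\cap\V_A(M)
\qquad\text{and}\qquad
\dim_k N_\zeta\le D:=(\dim_k M)(\dim_k\Omega^{2d}_A(k)),
$$
with the bound $D$ independent of $\zeta$. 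By Krull's principal ideal theorem each $\V_A(N_\zeta)$ has dimension $\ge 2$, so by Proposition~\ref{properties}(1),(2) every $N_\zeta$ admits a non-projective indecomposable summand. Decomposing the infinitely many $N_\zeta$'s thus obtained into indecomposables, the infinitude of distinct $\V_A(N_\zeta)$'s combined with Proposition~\ref{properties}(2) forces infinitely many pairwise distinct varieties among the varieties of the indecomposable summands, hence infinitely many pairwise non-isomorphic indecomposable $B$-modules of dimension $\le D$ with pairwise distinct support varieties.

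To finish, I would deduce a contradiction with the assumed representation type. The representation-finite case is immediate since only finitely many indecomposable $B$-modules of dimension $\le D$ exist up to isomorphism. In the tame case, the indecomposable $B$-modules of dimension $\le D$ fall, up to isomorphism, into a finite set together with finitely many one-parameter families $\{X_\lambda\}_{\lambda\in k}$, and in each such family $\V_A(X_\lambda)$ is constant on a cofinite subset of parameters, by a semicontinuity argument for $\dim_k\Ext^i_A(X_\lambda,S)$ applied to an algebraic family of $B$-modules, combined with the finite generation of $\Ext^{\DOT}_A(X_\lambda,S)$ over $\coh^{\ev}(A,k)$ granted by assumption ({\bf fg}). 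Hence only finitely many support varieties occur among indecomposable $B$-modules of dimension $\le D$, contradicting the previous paragraph. The delicate step --- and the main obstacle in porting \cite{F} to the present generality --- is exactly this generic constancy of $\V_A$ along a one-parameter family, which requires handling algebraic families of $A$-modules carefully and transferring Farnsteiner's semicontinuity arguments to the $\coh^{\ev}(A,k)$-module $\Ext^{\DOT}_A(X_\lambda,S)$.
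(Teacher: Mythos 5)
Your first half essentially parallels the paper's construction (using Corollary~\ref{block} to produce infinitely many non-isomorphic indecomposable $B$-modules of bounded dimension with pairwise distinct support varieties), though the paper makes this more careful by working with $M_B:=\bigoplus_{S\in\Irr(B)}S$ and invoking \cite[Lemma 1.1]{F} to construct a family $\{\zeta_s\}_{s\in k}$ with the precise dimension conditions $\dim Z(\langle\zeta_s\rangle)\cap\V_B=n-1$ and $\dim Z(\langle\zeta_s\rangle)\cap Z(\langle\zeta_t\rangle)\cap\V_B=n-2$ for $s\neq t$; these give both the distinctness and the fact that each chosen summand $X_s$ has complexity $n-1\geq 2$.

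The serious gap is in your handling of the tame case. You propose to show that along each one-parameter family $\{X_\lambda\}$ the support variety $\V_A(X_\lambda)$ is generically constant via a semicontinuity argument for $\dim_k\Ext^i_A(X_\lambda,S)$. You flag this as ``the delicate step,'' but you never prove it, and it is far from routine: this is precisely the neighborhood of the error in Rickard's original argument (his \cite[Lemma~1]{R}, for which $k[x,y]/(x^2,y^2)$ is a counterexample), and the whole point of Farnsteiner's work was to \emph{avoid} semicontinuity claims about varieties in families. The paper's proof takes a fundamentally different route at this stage: it invokes Crawley-Boevey's Theorem~D to conclude that a non-wild algebra has, in each dimension, only finitely many indecomposables (up to isomorphism) that are \emph{not} isomorphic to their Auslander-Reiten translates; then, since $A$ is self-injective with $\tau_A=\nu_A\circ\Omega_A^2$ and the Nakayama automorphism $\nu_A$ of a finite dimensional Hopf algebra has finite order, any $\tau_A$-fixed module is $\Omega$-periodic and hence has complexity $1$. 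This contradicts the existence of infinitely many indecomposable $X_s$ of a common dimension with $\cx_A(X_s)\geq 2$. Note that your argument does not track complexity of the summands at all, only distinctness of their varieties, so it cannot be patched into the paper's argument directly; you would need both the complexity bound on the summands and the Crawley-Boevey/AR-translate reasoning to replace your unproven semicontinuity step.
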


\begin{proof}
Let $\Irr(B)$ denote a complete set of representatives for the isomorphism classes
of simple $B$-modules. Let $M_B:=\oplus_{S\in \Irr(B)} S$, let $\V_B:= \V_A(M_B)$
denote the support variety of $M_B$, and set $n:=\dim\V_B$. It follows from 
Proposition \ref{ses} that $n\ge\cx_A(M)\ge 3$. By \cite[Lemma 1.1]{F}, there are
non-zero elements $\zeta_s$ ($s\in k$) in $\coh^{2d}(A,k)$ for some $d>0$ such that
\begin{itemize}
\item[(i)]  $\dim Z(\langle\zeta_s\rangle)\cap \V_B=n-1$ for all $s\in k$,
\item[(ii)] $\dim Z(\langle\zeta_s\rangle)\cap Z(\langle\zeta_t\rangle)\cap\V_B=n-2$
            for $s\neq t\in k$.
\end{itemize}
By applying Corollary \ref{block} to $M_B$, there are $B$-modules $N_{\zeta_s}$ such
that
$$
\V_A(N_{\zeta_s})=Z(\langle\zeta_s\rangle)\cap\V_B
\ \ \mbox{ and } \ \
\dim_k N_{\zeta_s}\leq (\dim_k M_B)(\dim_k \Omega_A^{2d}(k))
$$
for all $s\in k$. Decompose $N_{\zeta_s}$ into a direct sum of indecomposable $B$-modules.
By Proposition \ref{properties}(2) and (i) above, there is an indecomposable direct summand
$X_s$ of $N_{\zeta_s}$ such that
$$
\V_A(X_s)\subseteq\V_A(N_{\zeta_s})=Z(\langle\zeta_s\rangle)\cap\V_B
$$
and $\dim\V_A(X_s)=n-1$. We will show that $\V_A(X_s)\neq\V_A(X_t)$ when $s\neq t$:
Note that if $\V_A(X_s)=\V_A(X_t)$ then
$$
\V_A(X_s)\subseteq Z(\langle\zeta_s\rangle)\cap Z(\langle\zeta_t\rangle)\cap\V_B\,,
$$
and the latter variety has dimension $n-2$ if $s\neq t$. Therefore the varieties $\V_A(X_s)$
are distinct for different values of $s$, implying that the indecomposable $B$-modules $X_s$
are pairwise non-isomorphic. The dimensions of the modules $X_s$ are all bounded by $(\dim_k
M_B)(\dim_k\Omega_A^{2d}(k))$, since $X_s$ is a direct summand of $N_{\zeta_s}$. Consequently,
there are infinitely many non-isomorphic indecomposable $B$-modules $X_s$ of some fixed
dimension. By Proposition \ref{complexity}, $\cx_A(X_s)=\dim\V_A(X_s)=n-1\geq 2$.

Suppose now that $B$ is not wild. By \cite[Corollary C]{CB}, $B$ is tame or represen\-tation-finite
and it follows from \cite[Theorem D]{CB} that only finitely many indecomposable $B$-modules
of any dimension (up to isomorphism) are not isomorphic to their Auslander-Reiten translates.
Since the Auslander-Reiten translation $\tau_A$ is the same as $\nu_A\circ\Omega_A^2=\Omega_A^2
\circ\nu_A$ where $\nu_A$ denotes the Nakayama automorphism of $A$ (see \cite[Proposition
IV.3.7(a)]{ARS}) and the Nakayama automorphism of any finite dimensional Hopf algebra
has finite order (see \cite[Lemma~1.5]{FMS}), it is clear that any module isomorphic to its
Auslander-Reiten translate is periodic and thus has complexity 1. Hence in any dimension
there are only finitely many isomorphism classes of indecomposable $B$-modules with complexity
not equal to 1. This cannot be the case, as we have shown that for some dimension there
are infinitely many non-isomorphic indecomposable $B$-modules of complexity greater than~1. 
Therefore $B$ is wild.
\end{proof}

As an immediate consequence of Theorem \ref{mainthm} and the Trichotomy Theorem
\cite[Corollary C]{CB} one obtains as in \cite[Corollary 3.2]{F}:

\begin{cor}\label{maincor}
Let $A$ be a finite dimensional Hopf algebra over $k$ for which assumption {\rm ({\bf fg})}
holds. If $B$ is a tame block of $A$, then $\cx_A(M)\le 2$ for every finite dimensional
$B$-module $M$.
\end{cor}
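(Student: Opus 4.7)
The plan is simply to invoke the contrapositive of Theorem \ref{mainthm} together with the Trichotomy Theorem, as the corollary is immediate once those two tools are in place. Concretely, I would argue as follows.

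First, by the Trichotomy Theorem \cite[Corollary C]{CB}, every finite dimensional associative algebra over an algebraically closed field is exactly one of representation-finite, tame, or wild. In particular, the hypothesis that $B$ is tame implies that $B$ is not wild. Thus the contrapositive of Theorem \ref{mainthm}, applied to the block $B$, asserts that no finite dimensional $B$-module $M$ can satisfy $\cx_A(M) \ge 3$; equivalently, $\cx_A(M) \le 2$ for every finite dimensional $B$-module $M$.

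There is no genuine obstacle here: the corollary is a direct logical consequence of the two cited results, and no further computation or additional property of support varieties is required. The only subtlety worth noting is that the complexity bound is formulated relative to $A$ (using $\cx_A$) rather than relative to $B$, but this is consistent with Theorem \ref{mainthm} since a $B$-module is an $A$-module on which the other block idempotents act as zero, so projective resolutions over $B$ and $A$ have the same rate of growth for such modules.
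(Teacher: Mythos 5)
Your proof is correct and matches the paper's own reasoning: the paper states that the corollary is "an immediate consequence of Theorem \ref{mainthm} and the Trichotomy Theorem \cite[Corollary C]{CB}," which is exactly the contrapositive argument you give. Your closing remark about $\cx_A$ versus $\cx_B$ is a reasonable sanity check, though the paper simply states everything in terms of $\cx_A$ throughout and does not need to address it.
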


%%%%%%%%%%%%%%%%%%%%%%%%%%%%%%%%%%%%%%%%%%%%%%%%%%%%%%%%%%%%%%%%%%%%%%%%%%%%%%%%%%%%%%%%%%%%%%%%%%%

\section{Small quantum groups}\label{QG}

Let $\g$ be a finite dimensional complex simple Lie algebra, $\Phi$ its root system, $h$
its Coxeter number, and $r:=\rank(\Phi)$ its rank. Let $\ell>1$ be an odd integer and
assume that $\ell$ is not divisible by $3$ if $\Phi$ is of type G$_2$. Let $q$ be a
primitive $\ell$-th root of unity in $\C$. Let $U_q(\g)$ denote Lusztig's quantum group
\cite{L1} and let $u_q(\g)$ denote the small quantum group \cite{L2}.

Now fix a set of simple roots, and let $\Phi^+$ and $\Phi^-$ denote the corresponding sets
of positive and negative roots, respectively. Then $\g$ has a standard Borel subalgebra
corresponding to $\Phi^+$ and an opposite standard Borel subalgebra corresponding to
$\Phi^-$. We will also use the notation $u_q^+(\g)$ to denote the Hopf subalgebra of
$u_q({\g})$ corresponding to the standard Borel subalgebra of $\g$ and $u_q^-(\g)$ to
denote the Hopf subalgebra of $u_q(\g)$ corresponding to the opposite standard Borel
subalgebra of $\g$. Note that $u_q^-(\g)\cong u_{q^{-1}}^+(\g)$ (cf.\ \cite[(1.2.10)]{CK}).

We will need some knowledge about the complexity of the trivial $u_q^+(\g)$-module
$\C$ (which according to Proposition \ref{cx}(1) is the module of largest complexity).
Let $\rho$ be half the sum of the positive roots, let $\langle-,-\rangle$ denote the
positive definite symmetric bilinear form on the real vector space spanned by $\Phi$
such that $\langle\alpha,\alpha\rangle=2$ if $\alpha$ is a short root of $\Phi$, and
set
$$
\Phi_0^+:=\{\alpha\in\Phi^+\mid\langle\rho,\alpha^{\vee}\rangle\in\ell\Z\}\,.
$$
Let $U_q^0(\g)$ denote the subalgebra of $U_q(\g)$ generated by the group-like elements
$K_i^{\pm 1}$ ($1\le i\le r$) and the elements $\left[K_i;N\atop n\right]$ ($1\le i\le
r$; $N\in\Z$, $n\in\Z_{\ge 0}$) (see \cite[(4.1)(b)]{L1}). Since Weyl's character formula
holds for the quantized induced modules $H_q^0(\lambda)$ (see \cite[Corollary 5.12]{APW})
and the rest of the argument in \cite[\S2.2--\S2.5]{UGA} depends only on the characters
of the involved modules, one can prove the following result by replacing the category of
$B_1T$-modules by the category of $u_q^+(\g)U_q^0(\g)$-modules, the induced module $H^0
(\lambda)$ by the quantized induced module $H_q^0(\lambda)$, $p$ by $\ell$, and then
specializing to $\lambda=0$ (see \cite[(2.5.2)]{UGA} for the analogue for the first
Frobenius kernel $B_1$ of the Borel subgroup $B$):

\begin{lem}\label{lambda}
Let $q$ be a primitive $\ell$-th root of unity and assume that $\ell>1$ is an odd integer
not divisible by $3$ if $\Phi$ is of type $G_2$. Then $\cx_{u_q^+(\g)}(\C)\ge\vert\Phi^+
\vert-\vert\Phi_0^+\vert$.
\end{lem}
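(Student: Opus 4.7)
The plan is to carry out the character-theoretic translation indicated in the paragraph preceding the lemma: take the classical proof of the analogous lower bound for the first Frobenius kernel $B_1$ of the Borel $B$ given in \cite[\S 2.2--\S 2.5]{UGA} and replace each modular object with its quantum analogue. The decisive observation is that the classical proof depends only on the formal characters of the relevant induced modules, and by \cite[Corollary 5.12]{APW} the quantized induced modules $H_q^0(\lambda)$ satisfy Weyl's character formula with exactly the same shape as their modular counterparts $H^0(\lambda)$. Thus once the correct ambient module category is set up, the translation is essentially mechanical.

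First I would pass to the category of $u_q^+(\g)U_q^0(\g)$-modules, the quantum analogue of the category of $B_1T$-modules. The adjoined ``quantum torus'' $U_q^0(\g)$ gives every module a $\Z[\Lambda]$-valued formal character and, in particular, induces a weight grading on $\Ext_{u_q^+(\g)}^{\DOT}(\C,\C)$ that decomposes this Ext algebra into weight spaces whose dimensions can be tracked via characters. One then considers a minimal projective resolution of $\C$ in this larger category, expresses its terms in terms of the $H_q^0(\lambda)$'s via a quantum analogue of the standard projective-cover formula, and computes formal characters using Weyl's formula with $p$ replaced by $\ell$. Specializing to $\lambda = 0$ produces an explicit character-theoretic handle on the growth of this Ext algebra.

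The growth-rate analysis then runs as in \cite[\S 2.2--\S 2.5]{UGA}: the denominator $\prod_{\alpha \in \Phi^+}(1 - e^{-\alpha})$ of Weyl's formula normally forces polynomial growth of degree $|\Phi^+|$, and for each positive root $\alpha$ with $\langle \rho, \alpha^\vee \rangle \in \ell\Z$ a combinatorial degeneration modulo $\ell$ reduces the effective degree by one, accounting for the correction $|\Phi_0^+|$. Since the rate of growth of $\Ext_{u_q^+(\g)}^{\DOT}(\C,\C)$ equals $\cx_{u_q^+(\g)}(\C)$ by the proof of Proposition \ref{complexity}, the claimed lower bound follows. The main obstacle is purely bookkeeping: one must verify that each structural ingredient used in \cite[\S 2.2--\S 2.5]{UGA} --- the existence of $H_q^0(\lambda)$ via an appropriate induction functor, the quantum linkage principle, and the identification of characters of $\Omega^n(\C)$ with sums of induced-module characters --- has an established quantum counterpart, principally in \cite{APW}; with those inputs in hand the character identities transfer without further modification.
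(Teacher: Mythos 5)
Your proposal matches the paper's own approach: both transfer the character-theoretic argument of \cite[\S2.2--\S2.5]{UGA} to the quantum setting by replacing $B_1T$-modules with $u_q^+(\g)U_q^0(\g)$-modules, $H^0(\lambda)$ with $H_q^0(\lambda)$, and $p$ with $\ell$, justified by the quantum Weyl character formula \cite[Corollary 5.12]{APW}, and then specializing to $\lambda=0$. The paper gives only this sketch in the paragraph preceding the lemma, and your reconstruction of the details is consistent with it.
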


The following result, a consequence of \cite[Theorem 5.6]{D} or \cite[Theorem 6.3]{MPSW},
ensures that assumption ({\bf fg}) holds for $A=u_q^+(\g)$. In these papers it is shown
that the full cohomology ring is finitely generated. It follows that the even cohomology
ring is finitely generated: Without loss of generality, the generators of the full cohomology
ring are homogeneous. Take the set consisting of all generators in even degree and all
products of pairs of generators in odd degree. Since odd degree elements are nilpotent,
this set generates the cohomology in even degrees. A similar argument applies to modules
over the cohomology ring.

\begin{thm}\label{uqb}
Let $q$ be a primitive $\ell$-th root of unity and assume that $\ell$ is an odd integer not
divisible by $3$ if $\Phi$ is of type $G_2$. Then the even cohomology ring $\coh^{\ev}(u_q^+
(\g),\C)$ is finitely generated. Moreover, if $M$ is a finite dimensional $u_q^+(\g)$-module,
then $\coh^{\DOT}(u_q^+(\g),M)$ is finitely generated as an $\coh^{\ev}(u_q^+(\g),\C)$-module.
\end{thm}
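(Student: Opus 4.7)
The plan is to reduce the statement to inputs already established in the literature. By \cite[Theorem 5.6]{D} or \cite[Theorem 6.3]{MPSW}, the full cohomology ring $\coh^{\DOT}(u_q^+(\g),\C)$ is finitely generated as a $\C$-algebra under the stated hypotheses on $\ell$, and the corresponding finite-generation statement for $\coh^{\DOT}(u_q^+(\g),M)$ as a module over $\coh^{\DOT}(u_q^+(\g),\C)$ is also available there. Granting these inputs, the only thing to prove is that finite generation transfers from the $\Z$-graded ring to its even-degree subring, and correspondingly for modules.

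For the ring statement, I would choose homogeneous generators $\xi_1,\ldots,\xi_n$ of $\coh^{\DOT}(u_q^+(\g),\C)$ and separate them into an even-degree list $\alpha_1,\ldots,\alpha_r$ and an odd-degree list $\beta_1,\ldots,\beta_s$. Since $\coh^{\DOT}(u_q^+(\g),\C)$ is graded commutative and $\cha\C=0$, each $\beta_i$ satisfies $\beta_i^2=0$, so every monomial in the generators reduces (up to sign) to one of the form $\alpha_1^{a_1}\cdots\alpha_r^{a_r}\beta_{i_1}\cdots\beta_{i_p}$ with $i_1<\cdots<i_p$. Such a monomial has even total degree precisely when $p$ is even, in which case one groups the $\beta$'s into consecutive pairs. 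Hence $\coh^{\ev}(u_q^+(\g),\C)$ is generated as a $\C$-algebra by $\alpha_1,\ldots,\alpha_r$ together with the products $\beta_i\beta_j$ for $i<j$, a finite set.

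For the module statement I would note that the same normal form shows $\coh^{\DOT}(u_q^+(\g),\C)$ is itself finitely generated as an $\coh^{\ev}(u_q^+(\g),\C)$-module, with generating set $\{1,\beta_1,\ldots,\beta_s\}$: any odd-degree monomial has an odd number of $\beta$'s, and pairing up all but one exhibits it as an even-degree element times a single $\beta_i$. Consequently any $\coh^{\DOT}(u_q^+(\g),\C)$-module that is finitely generated over the full cohomology ring is automatically finitely generated over the even subring, and applying this to $\coh^{\DOT}(u_q^+(\g),M)$ yields the second assertion. There is no substantive obstacle here: the deep input is packaged in the cited theorems, and what remains is a routine combinatorial reduction in a graded-commutative algebra of characteristic zero.
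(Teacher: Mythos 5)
Your proposal is correct and follows essentially the same approach as the paper: both cite \cite[Theorem 5.6]{D} or \cite[Theorem 6.3]{MPSW} for finite generation of the full cohomology ring and its modules, and both reduce to the even subring by taking the even-degree generators together with pairwise products of odd-degree generators, using that odd-degree elements square to zero in a graded-commutative algebra over $\C$. Your write-up merely spells out in more detail the monomial normal form and the module-generator set $\{1,\beta_1,\ldots,\beta_s\}$ that the paper leaves implicit in the phrase ``a similar argument applies to modules.''
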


The following theorem was proved by Cibils \cite[Propositions 3.1 and 3.3]{C} in the simply
laced case for $\ell\geq 5$ by purely representation-theoretic methods using quivers and
bimodule complements,
 and, in general, for $\ell>h$
      by Gordon \cite[Theorem 7.1(a)(i)]{G2}
     using geometric methods similar to ours (see also \cite[Corollary~4.6(ii)]{BG}
     for replacing the restriction $\ell>h$ by $\ell$ good).

\begin{thm}\label{sqb}
Let $r\ge 2$ and let $q$ be a primitive $\ell$-th root of unity. Assume that $\ell>1$ is
an odd integer not divisible by $3$ if $\Phi$ is of type $G_2$. Then $u_q^+(\g)$ is wild.
\end{thm}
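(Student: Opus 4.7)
The plan is to apply Theorem~\ref{mainthm} to the principal block $B_0$ of $u_q^+(\g)$, i.e.\ the block containing the trivial module $\C$. Assumption ({\bf fg}) is known for $u_q^+(\g)$ by Theorem~\ref{uqb}, so it suffices to exhibit a single $B_0$-module whose complexity is at least $3$; the natural candidate is $\C$ itself. Once $\cx_{u_q^+(\g)}(\C)\ge 3$ has been established, Theorem~\ref{mainthm} gives that $B_0$ is wild, and since the category of finite dimensional $u_q^+(\g)$-modules contains the category of finite dimensional $B_0$-modules as a full direct factor, wildness of this block immediately propagates to the ambient algebra.

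By Lemma~\ref{lambda} one has $\cx_{u_q^+(\g)}(\C)\ge |\Phi^+|-|\Phi_0^+|$, so the entire problem reduces to the combinatorial inequality $|\Phi^+|-|\Phi_0^+|\ge 3$. The key observation is that $\langle\rho,\alpha_i^\vee\rangle=1$ for each simple root $\alpha_i$, and since $\ell>1$ this value is never divisible by $\ell$; hence no simple root lies in $\Phi_0^+$. In particular, for $r\ge 3$ the $r$ simple roots alone already produce $r\ge 3$ positive roots outside $\Phi_0^+$, and the required inequality follows without any case analysis.

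The main obstacle is therefore the rank $r=2$ case, where the bound turns out to be tight. The plan here is to enumerate the positive roots of the three possible types $A_2$, $B_2=C_2$, and $G_2$ and to tabulate $\langle\rho,\alpha^\vee\rangle$ in each case; a short direct calculation yields the multisets $\{1,1,2\}$ in type $A_2$, $\{1,1,2,3\}$ in type $B_2$, and $\{1,1,2,3,4,5\}$ in type $G_2$. Under the standing hypotheses on $\ell$ (odd, $>1$, and not divisible by $3$ in type $G_2$), one checks that at most one of the listed values can lie in $\ell\Z$ in each type, which gives $|\Phi^+|-|\Phi_0^+|\ge 3$ uniformly. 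Combined with the $r\ge 3$ case this yields $\cx_{u_q^+(\g)}(\C)\ge 3$ and, via Theorem~\ref{mainthm}, the wildness of $u_q^+(\g)$.
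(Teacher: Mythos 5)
Your proof is correct and uses the same overall strategy as the paper (reduce to $\cx_{u_q^+(\g)}(\C)\ge 3$ via Lemma~\ref{lambda} and Theorem~\ref{uqb}, then invoke Theorem~\ref{mainthm}), but the combinatorial argument that $|\Phi^+|-|\Phi_0^+|\ge 3$ is organized differently. The paper first treats $\ell\ge h$ (where $\Phi_0^+=\emptyset$), then for $\ell<h$ and rank $\ge 3$ passes to the Hopf subalgebra $u_q^+(\mathfrak{sl}_3)$ via Proposition~\ref{cx}(2), and finally handles $B_2$ ($\ell=3$) and $G_2$ ($\ell=5$) by computing $|\Phi_0^+|=1$. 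You instead make the cleaner observation that $\langle\rho,\alpha_i^\vee\rangle=1$ for every simple root, so $\Phi_0^+$ misses all $r$ simple roots and $|\Phi^+|-|\Phi_0^+|\ge r$; this disposes of $r\ge 3$ with no case analysis and no subalgebra reduction, and your rank-two tabulation of the multisets $\{1,1,2\}$, $\{1,1,2,3\}$, $\{1,1,2,3,4,5\}$ (which I checked are correct) finishes the job. What your route buys is the complete elimination of Proposition~\ref{cx}(2) from the argument and a more uniform treatment; this is close in spirit to the remark the authors make after their proof, though they still retain the $\mathfrak{sl}_3$ reduction in the written version. One small remark: your appeal to the principal block $B_0$ is harmless but unnecessary here, since, as the paper notes, $u_q^+(\g)$ has only one block.
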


Note that $u_q^+(\g)$ has only one block. If $r=1$, then $u_q^+(\g)$ is known to be
representation-finite \cite[Proposition 3.3]{C}.

\begin{proof}
First suppose $\ell\geq h$. Then $\Phi_0^+=\emptyset$ and consequently Lemma \ref{lambda}
implies that $\cx_{u_q^+(\g)}(\C)\geq\vert\Phi^+\vert\geq 3$ since the rank $r$ of $\Phi$
is at least $2$ by hypothesis. In particular, this argument applies to $A_2$ for any $\ell$,
since in this case $h=3$. By Theorem \ref{mainthm}, $u_q^+(\g)$ is wild.

Now suppose $\ell<h$ and that $\g$ is neither of type $B_2$ nor of type $G_2$. Then
$u_q^+(\g)$ contains $u_q^+(\mathfrak{sl}_3)$ as a Hopf subalgebra. Applying Proposition
\ref{cx}(2), because $\ell\geq 3$ by hypothesis, it follows that $\cx_{u_q^+(\g)}(\C)\ge
\cx_{u_q^+(\mathfrak{sl}_3)}(\C)\ge 3$, and as before, $u_q^+(\g)$ is wild by Theorem
\ref{mainthm}.

It remains to consider the cases when $\g$ is of type $B_2$ or of type $G_2$, and $\ell
<h$. If $\g$ is of type $B_2$, then $h=4$, and by hypothesis $\ell=3$, which yields
$\vert\Phi_0^+\vert=1$. It follows from Lemma \ref{lambda} that $\cx_{u_q^+(\g)}(\C)\ge 3$,
implying again that $u_q^+(\g)$ is wild by Theorem \ref{mainthm}. If $\g$ is of type $G_2$,
then $h=6$ and by hypothesis $\ell=5$, which yields $\vert\Phi_0^+\vert=1$. It follows as
before that $\cx_{u_q^+(\g)}(\C)\ge 5$, implying again that $u_q^+(\g)$ is wild by Theorem
\ref{mainthm}.
\end{proof}

\begin{remark}
By carrying out the first part of the proof only for type $A_2$ and instead dealing with
$B_2$ and $G_2$ also for $\ell\ge h$ one indeed needs to use Lemma \ref{lambda} only in
the rank two case.
%as claimed above.
\end{remark}

The following result is a consequence of \cite[Corollary 6.5]{MPSW}, and it ensures that
assumption ({\bf fg}) holds for $A=u_q(\g)$.

\begin{thm}\label{uqg}
Let $q$ be a primitive $\ell$-th root of unity and assume that $\ell>1$ is an odd integer
not divisible by $3$ if $\Phi$ is of type G$_2$. Then the even cohomology ring $\coh^{\ev}
(u_q(\g),\C)$ is finitely generated. Moreover, if $M$ is a finite dimensional $u_q(\g)$-module,
then $\coh^{\DOT}(u_q(\g),M)$ is finitely generated as an $\coh^{\ev}(u_q(\g),\C)$-module.
\end{thm}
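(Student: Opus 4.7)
The plan is to deduce this result by essentially the same reduction used to establish Theorem \ref{uqb}, starting now from the finite generation result for $u_q(\g)$ itself rather than for its Borel part. Since the substantive content is already embedded in the cited literature, what remains is entirely formal bookkeeping.

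First, I would invoke \cite[Corollary 6.5]{MPSW}, which asserts that the full cohomology ring $\coh^{\DOT}(u_q(\g),\C)$ is finitely generated as a $\C$-algebra, and that for any finite dimensional $u_q(\g)$-module $M$, the $\coh^{\DOT}(u_q(\g),\C)$-module $\coh^{\DOT}(u_q(\g),M)$ is finitely generated. This is the one nontrivial input and is precisely analogous to the role played by \cite[Theorem 5.6]{D} or \cite[Theorem 6.3]{MPSW} in the proof of Theorem \ref{uqb}.

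Next, to pass from the full cohomology ring to its even subring, I would repeat verbatim the argument explicitly given in the paragraph preceding Theorem \ref{uqb}: pick a finite set of homogeneous algebra generators for $\coh^{\DOT}(u_q(\g),\C)$, and replace them by the set consisting of all even-degree generators together with all pairwise products of odd-degree generators. By graded commutativity (see \cite[Section 5.6]{GK} or \cite[Section 2.2]{SA}) and the fact that $\cha\C = 0 \ne 2$, each odd-degree homogeneous element squares to zero, so any even-degree monomial in the original generators can be written in terms of the new generating set. Hence $\coh^{\ev}(u_q(\g),\C)$ is finitely generated.

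Finally, for the module statement, I would take a finite set of homogeneous $\coh^{\DOT}(u_q(\g),\C)$-module generators $\{m_1,\ldots,m_s\}$ of $\coh^{\DOT}(u_q(\g),M)$ and enlarge it by adjoining all products $\zeta_j \cdot m_i$, where the $\zeta_j$ are the odd-degree ring generators fixed above. The resulting finite set generates $\coh^{\DOT}(u_q(\g),M)$ as a module over $\coh^{\ev}(u_q(\g),\C)$, since any coefficient of odd total degree in the original generating expression decomposes into an even-degree coefficient times one of the $\zeta_j$'s, which can be absorbed into the chosen generators. I do not expect a genuine obstacle: the deep analytic content is confined to \cite[Corollary 6.5]{MPSW}, and the remainder is a direct repetition of the reduction already carried out in the $u_q^+(\g)$ case.
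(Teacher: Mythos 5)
Your proposal is correct and coincides with the paper's argument: the paper likewise cites \cite[Corollary 6.5]{MPSW} for finite generation of the full cohomology ring (and of $\coh^{\DOT}(u_q(\g),M)$ over it), and then applies the same passage to the even subring via graded commutativity and nilpotence of odd-degree elements, exactly as spelled out in the paragraph preceding Theorem \ref{uqb}. You have simply made explicit the details that the paper compresses into the remark that ``a similar argument applies to modules over the cohomology ring.''
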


Note that under slightly stronger conditions on $\ell$ the above theorem is also a consequence
of the explicit computation of the cohomology ring $\coh^{\DOT}(u_q(\g),\C)$ which is due to
Ginzburg and Kumar \cite[Theorem 3]{GK} for $\ell>h$, and to Bendel, Nakano, Parshall, and
Pillen \cite[Theorem 1.3.4]{BNPP} for $\ell\leq h$ such that $\ell$ is neither a bad prime
for $\Phi$ nor $\ell=3$ for $\Phi$ of types $B$ and $C$.

The {\em principal block} of a Hopf algebra is defined to be the block corresponding
to the one-dimensional trivial module. The following theorem implies a conjecture of
Cibils \cite[p.\ 542]{C}.

\begin{thm}\label{sqg}
Let $r\ge 2$ and let $q$ be a primitive $\ell$-th root of unity. Assume that $\ell>1$
is an odd integer not divisible by $3$ if $\Phi$ is of type $G_2$. Then the principal
block of $u_q(\g)$ is wild.
\end{thm}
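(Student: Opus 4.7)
The plan is to mimic the proof of Theorem \ref{sqb}, reducing the claim to an application of Theorem \ref{mainthm} by producing a module in the principal block of $u_q(\g)$ whose complexity is at least $3$.

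The natural candidate for such a module is the trivial module $\C$, which by definition lies in the principal block $B_0$ of $u_q(\g)$. To bound $\cx_{u_q(\g)}(\C)$ from below, I would invoke Proposition \ref{cx}(2) applied to the Hopf subalgebra $u_q^+(\g)\subseteq u_q(\g)$: this gives
\[
\cx_{u_q^+(\g)}(\C)\;\le\;\cx_{u_q(\g)}(\C).
\]
The hypotheses on $r$ and $\ell$ are exactly those of Theorem \ref{sqb}, and its proof in fact establishes the stronger statement that $\cx_{u_q^+(\g)}(\C)\ge 3$ in every case (via Lemma \ref{lambda} when $\ell\ge h$ or when $\g$ is of type $B_2$ or $G_2$, and via the embedding of $u_q^+(\mathfrak{sl}_3)$ into $u_q^+(\g)$ in the remaining cases). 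Combining these two inequalities yields $\cx_{u_q(\g)}(\C)\ge 3$.

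With this in hand, I would invoke Theorem \ref{uqg} to verify assumption ({\bf fg}) for $A=u_q(\g)$, and then apply Theorem \ref{mainthm} with $B=B_0$ and $M=\C$ to conclude that the principal block $B_0$ of $u_q(\g)$ is wild. The main obstacle, in principle, is ensuring that the complexity lower bound for the trivial module survives restriction-to-subalgebra arguments and that $\C$ actually belongs to the principal block; both are immediate here — the first by Proposition \ref{cx}(2) together with the fact that $u_q(\g)$ is free over $u_q^+(\g)$ (Nichols–Zoeller), and the second by the very definition of the principal block — so the proof becomes essentially a short bookkeeping combination of Theorem \ref{sqb}, Proposition \ref{cx}(2), Theorem \ref{uqg}, and Theorem \ref{mainthm}.
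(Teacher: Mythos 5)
Your proposal is correct and is essentially identical to the paper's proof: both extract $\cx_{u_q^+(\g)}(\C)\ge 3$ from the proof of Theorem~\ref{sqb}, pass to $\cx_{u_q(\g)}(\C)\ge 3$ via Proposition~\ref{cx}(2), and then apply Theorem~\ref{mainthm} to the principal block with $M=\C$. The only difference is that you explicitly cite Theorem~\ref{uqg} for assumption ({\bf fg}) and flag the Nichols--Zoeller freeness step, both of which the paper leaves implicit.
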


If $r=1$, then $u_q(\g)$ is known to be tame \cite{S,X}.

\begin{proof}
It follows from the proof of Theorem \ref{sqb} that $\cx_{u_q^+(\g)}(\C)\ge 3$ if $r\ge 2$.
Since $u_q^+(\g)$ is a Hopf subalgebra of $u_q(\g)$, we can apply Proposition \ref{cx}(2)
to obtain $\cx_{u_q(\g)}(\C)\ge\cx_{u_q^+(\g)}(\C)\ge 3$. Then Theorem \ref{mainthm}
implies that the principal block of $u_q(\g)$ is wild.
\end{proof}

\begin{remark}
It is also possible to give a direct proof of Theorem \ref{sqg} along the lines of the
proof of Theorem \ref{sqb} by replacing $\Phi_0^+$ by $\Phi_0:=\{\alpha\in\Phi\mid
\langle\rho,\alpha^{\vee}\rangle\in\ell\Z\}$ and then using \cite[Theorem 8.2.1(a)]{BNPP}
instead of Lemma \ref{lambda}.
\end{remark}

  Note that Gordon also proved the wildness of certain truncated quantized
     function algebras of simply-connected connected complex semisimple algebraic
     groups at roots of unity of odd degree (not divisible by $3$ if the group has a
     component of type $G_2$) \cite[Remark (ii) after Theorem 7.1]{G2}.

We conclude by mentioning that Cibils' proof also works for even $\ell$ to which we hope
to return in a future paper. Note also that Cibils' proof uses the subalgebra of $u_q^+
(\g)$ corresponding to the largest nilpotent ideal of the standard Borel subalgebra of
$\g$ (which is not a Hopf subalgebra) whereas our proof does not need this.

%%%%%%%%%%%%%%%%%%%%%%%%%%%%%%%%%%%%%%%%%%%%%%%%%%%%%%%%%%%%%%%%%%%%%%%%%%%%%%%%%%%%%%%%%%%%%%%%%%%

%%%%%%%%%%%%%%%%%%%%%%%%%%%%%%%%%%%%%%%%%%%%%%%%%%%%%%%%%%%%%%%%%%%%%%%%%%%%%%%%%%%%%%%%%%%%%%%%%%%

\end{document}